\newtheorem{theorem}{Theorem}
\newtheorem*{theorem*}{Main Theorem}
\newtheorem{proposition}[theorem]{Proposition}
\newtheorem{lemma}[theorem]{Lemma}
\newtheorem{definition}[theorem]{Definition}
\numberwithin{theorem}{section}
\newcommand\rr{\rightarrow}
\newcommand{\PP}{\mathbb P}
\newcommand{\ZZ}{\mathbb{Z}}
\newcommand{\QQ}{\mathbb{Q}}
\newcommand{\mc}{\mathcal}
\newcommand{\wdt}{\widetilde}
\begin{document}
\title{Counterexample \\to the N\'eron--Ogg--Shafarevich criterion \\for Calabi--Yau threefolds}

\author{Tymoteusz Chmiel}
\email{tymoteusz.chmiel@uj.edu.pl}
\address{Faculty of Mathematics and Computer Science, Jagiellonian University, ul. \L ojasiewicza 6, 30-348 Krak\'ow, Poland}

\author{Marcin Oczko}
\email{marcin.oczko@im.uj.edu.pl}
\address{Faculty of Mathematics and Computer Science, Jagiellonian University, ul. \L ojasiewicza 6, 30-348 Krak\'ow, Poland}

\classification{11G25, 14F30, 14D06, 14J32}
\keywords{Calabi-Yau threefolds, bad reduction, crystalline cohomology.}

\thanks{The first author was supported by the National Science Center grant no. 2023/49/N/ST1/04089. The second named author was partially supported by National Science Centre, Poland, grant No. 2020/39/B/ST1/03358.}

\begin{abstract}
For any prime $p>5$ we construct a Calabi-Yau threefold $X$ defined over a finite extension $K$ of $\mathbb{Q}_p$ such that every model of $X$ over $\operatorname{Spec}\mathcal{O}_K$ has singular special fiber, yet the Galois action on the $\ell$-adic cohomology group $H^3_{\acute{e}t}(X,\mathbb{Q}_\ell)$ is unramified for $\ell\neq p$ and crystalline for $\ell=p$. This provides a counterexample to the analogue of the N\'eron-Ogg-Shafarevich criterion in dimension three.
\end{abstract}

\maketitle 

\section{Introduction}

A \emph{Calabi-Yau manifold} is a smooth projective variety $X$ with trivial canonical bundle and vanishing cohomology groups $H^i(X,\mathcal{O}_X)$ for $0<i<\text{dim}\,X$. Usually one considers Calabi-Yau manifolds defined over $\mathbb{C}$. On the other hand, in this paper we are concerned with arithmetic properties of Calabi-Yau manifolds and the field of definition is either $p$-adic or finite.

One-dimensional Calabi-Yau manifolds are \emph{elliptic curves} and two-dimensional ones are called \emph{K3 surfaces}. This paper is concerned with \emph{Calabi-Yau threefolds} and a phenomenon that does not appear in lower dimensions: the \'etale cohomology fails to detect bad reduction. Thus the natural generalization of the N\'eron-Ogg-Shafarevich criterion does not hold in dimension $3$.

N\'eron-Ogg-Shafarevich criterion states that an elliptic curve $E$ over a $p$-adic field has good reduction if and only if the Galois representation $H^1_{\acute{e}t}(E,\mathbb{Q}_\ell)$ is unramified for $\ell\neq p$ and crystalline for $\ell=p$ (see \cite{Ogg}). A version of this criterion holds for $K3$ surfaces: $K3$ surface $S$ admitting a semi-stable model has potentially good reduction if and only if the Galois representation $H^2_{\acute{e}t}(S,\mathbb{Q}_\ell)$ is unramified for $\ell\neq p$ and crystalline for $\ell=p$ (see \cite{LM,BLL}).

It means that for Calabi-Yau varieties of dimension $\leq 2$ good reduction is detected by the Galois action on the middle cohomology. We show it is not the case in dimension three:

\begin{theorem*}
Let $p>5$ be a prime, $K:=\mathbb{Q}_p[\sqrt{p}]$ and denote by $G_K$ the absolute Galois group of $K$. There exists smooth Calabi-Yau threefold $Y_p$ defined over $K$ such that:
\begin{itemize}
 \item the Galois representation
 $$G_K\rightarrow \operatorname{Aut}\left( H^3_{\acute{e}t}(Y_p,\mathbb{Q}_\ell )\right)$$
 is unramified for $\ell \neq p$ and crystalline for $\ell =p$;
 \item $Y_p$ does not have a smooth model over $\operatorname{Spec}\mathcal{O}_L$ for any finite field extension $L/K$.
\end{itemize}
\end{theorem*}

The Galois action on the \'etale cohomology of a $p$-adic variety is an arithmetic analogue of the monodromy action on the rational cohomology of a complex manifold. In \cite{CvS} Cynk and van Straten constructed a family of complex Calabi-Yau threefolds over the punctured unit disc $\Delta^*$ such that the monodromy action of the fundamental group $\pi_1(\Delta^*,t)\simeq \mathbb{Z}$ on $H^3(X_{t},\mathbb{C})$ is trivial, yet the family cannot be completed to a smooth family over the entire unit disc $\Delta$.

To prove our Main Theorem we repurpose their example in mixed characteristic. In fact, the variety $Y_p$ is defined over $\mathbb{Q}[\sqrt{p}]\hookrightarrow\mathbb{C}$ and is a smooth member of the family of Cynk-van Straten. Thus our result proves the conjecture stated at the end of \cite{CvS}.

The structure of the paper is as follows. In section \ref{sec:construction} we present a detailed construction of the manifold $Y_p$. In section \ref{sec:crystalline} we prove that the Galois action on its cohomology is crystalline. In section \ref{sec:no-good} we show that $Y_p$ does not admit a smooth integral model over any finite extension of the base field $K$.

\section{Construction}\label{sec:construction}

Our example is constructed as a desingularization of a double cover of $\PP^3$ branched along a union of eight planes with no $k$-fold lines for $k \geq 4$ and no $l$-fold points for $l \geq 6$. Double covers of $\PP^3$ branched along such plane arrangements were introduced in \cite{CYB} and are called \emph{double octics}; their branching locus is called an \emph{octic arrangement}. Double octics provide a rich source of examples of Calabi-Yau threefolds:
\begin{theorem}[Theorem 2.1, \cite{CYB}]
\label{resolution of octic}
Let $D \subset \PP^3$ be an octic arrangement. There exists a sequence of blow-ups $\sigma = \sigma_1 \circ ... \circ \sigma_s \colon \wdt{\PP^3} \rr \PP^3$ and a smooth even divisor $D^* \subset \wdt{\PP^3}$ such that $\sigma_*(D^*) = D$ and the double covering of $\wdt{\PP^3}$ branched along $D^*$ is a smooth Calabi-Yau threefold.
\end{theorem}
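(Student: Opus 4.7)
The plan is to use the standard double-cover formalism: for a smooth double cover $f \colon X \to \wdt{\PP^3}$ branched along a smooth divisor $D^* = 2L'$, one has $K_X = f^*(K_{\wdt{\PP^3}} + L')$ and $f_*\mc{O}_X = \mc{O}_{\wdt{\PP^3}} \oplus {L'}^{-1}$. Hence if the blow-up $\sigma$ can be engineered so that $D^*$ is smooth and $L' \sim -K_{\wdt{\PP^3}}$, then $K_X = 0$ and
$$H^i(X, \mc{O}_X) = H^i(\wdt{\PP^3}, \mc{O}) \oplus H^i(\wdt{\PP^3}, K_{\wdt{\PP^3}}),$$
which vanishes for $0 < i < 3$ because $\wdt{\PP^3}$ is rational and by Serre duality. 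Starting from $D = 8H = 2 \cdot 4H$ on $\PP^3$ with $4H = -K_{\PP^3}$, my task therefore reduces to finding a sequence of blow-ups that resolves the singularities of $D$ while preserving the relation ``half of the branch divisor equals $-K$''.

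The technical heart is a local blow-up calculus. If $\sigma$ blows up a point of multiplicity $l$ on the current branch divisor $B$, then $K_{\wdt{Y}} = \sigma^*K_Y + 2E$ and $\sigma^*B = \wdt{B} + lE$. To keep the new branch divisor even I would set $B' = \wdt{B}$ when $l$ is even and $B' = \wdt{B} + E$ when $l$ is odd, with $L' = \sigma^*L - \lfloor l/2 \rfloor E$. The Calabi--Yau condition $L' \sim -K_{\wdt{Y}}$ is then preserved exactly when $\lfloor l/2 \rfloor = 2$, i.e.\ when $l \in \{4,5\}$. The analogous count at a $k$-fold line (codimension two, so $K_{\wdt{Y}} = \sigma^*K_Y + E$) requires $\lfloor k/2 \rfloor = 1$, i.e.\ $k \in \{2,3\}$. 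These are precisely the ranges allowed by the hypothesis, and in each case the blow-up is crepant.

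With this calculus in hand I would execute the resolution in decreasing order of multiplicity: first blow up the $5$-fold and $4$-fold points, then the triple and double lines, and finally iterate to clear up the triple and double points of $D$ together with any new singularities that the preceding blow-ups introduced on the exceptional divisors. Each step strictly simplifies the stratification of the singular locus, and the uniform bounds $l \leq 5$, $k \leq 3$ guarantee that only finitely many local configurations can arise. Smoothness and the Calabi--Yau properties of the resulting double cover then follow from the two computations above. The main obstacle is the bookkeeping: one must verify that the different strata can be blown up in a compatible order (e.g.\ a $4$-fold point lying on a triple line has to be resolved before the line), and one has to check, configuration by configuration, that the strict transforms of the eight planes together with the added exceptional divisors end up smooth and even. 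This case analysis is what makes the combinatorial hypotheses on the arrangement essential.
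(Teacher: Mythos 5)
This theorem is imported from the cited reference [CS99] and the paper gives no proof of it, so there is nothing internal to compare against; your outline is, in substance, exactly the argument of Cynk--Szemberg. The double-cover formalism and the discrepancy bookkeeping are correct: discrepancy $2E$ at a point and $E$ along a curve force $\lfloor l/2\rfloor=2$ and $\lfloor k/2\rfloor=1$ for the blow-up to stay crepant with an even branch divisor, which is precisely why the hypotheses exclude $k$-fold lines with $k\geq 4$ and $l$-fold points with $l\geq 6$; the cohomology vanishing via $f_*\mathcal{O}_X=\mathcal{O}\oplus L'^{-1}$, $L'^{-1}=K$, rationality of the blown-up $\mathbb{P}^3$ and Serre duality is also right. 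The one place where your write-up stops short of a proof is the final paragraph: the claim that the strata can be blown up in a compatible order and that the process terminates is asserted rather than established, and this is in fact the bulk of the original proof. In particular, blowing up a triple line (resp.\ a fivefold point) adds the exceptional divisor to the branch locus, and this new component meets the strict transforms of the planes in new double curves (resp.\ lines) that must themselves be blown up --- this is visible in the paper's own construction, where the lines $m_i=P_i\cap H$ appear on the exceptional quadric $H$ over the triple line and are then blown up in the second step. So ``each step strictly simplifies the stratification'' is not literally true; one needs the explicit ordering (odd-multiplicity points, then even-multiplicity points, then triple lines, then all double curves including the newly created ones) together with a check that the finitely many local configurations permitted by the hypotheses all resolve to a smooth even divisor. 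Your proposal correctly identifies this as the remaining work, but as written it is a sketch of the standard proof rather than a complete one.
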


We apply Theorem \ref{resolution of octic} to the following family of double octics:
\begin{equation}\label{double-octic}
X_t
:= \Big\{u^2 = xy (x + y) z (x + 2 y + z + tv) v (y + z + v) (x + y + z + (t - 1) v)\Big\}
\end{equation}
Here $t\in\PP^1$ is a parameter of the family and $(u:x:y:z:v) \in \PP(4,1,1,1,1)$ are coordinates of the weighted projective space containing double octics $X_t$. The construction is similar to that of \cite{CvS} in the complex case but we include it for the sake of completeness.

We label the eight planes defining $X_t$ as $P_1,...,P_8$, in the same order as their equations appear in (\ref{double-octic}). Note that we suppress the dependence on $t$ from notation. Each $X_t$ is a double cover of $\mathbb{P}^3$ branched along the octic arrangement $P_1\cup\cdots\cup P_8$.

For generic value of $t\ (\neq 0,1,2,\infty)$ double octic $X_t$ has the following singularities:
\begin{itemize}
\item a single triple line $l_3:=\{x=y=0\}$;
\item 25 double lines;
\item six fourfold points, none of them lying on the triple line $l_3$;
\item five fourfold points on the triple line $l_3$.
\end{itemize}
When $t=0$, two fourfold points $(0:0:0:1)$ and $(0:0:-t:1)$ coincide to create a fivefold point lying on the planes $P_1,P_2,P_3,P_4,P_5$. Other singularities remain the same.

Let $P\subset X_0$ be the plane spanned by $P_4 \cap P_5$ and $l_3$. Four planes $P_1,P_2,P_3,P$, all containing the line $l_3$, define four points in $\PP^1$ identified with the space of planes through $l_3$. Let $E$ be the double cover of $\PP^1$ branched at these four points. It is an elliptic curve with the $j$-invariant $j(E)=1728$. We will later see that its cohomology appears in the cohomology of the special fiber of the family (see Theorem \ref{th:coh}).

Fix a prime $p > 5$. Equations of $X_p$ have coefficients in $\ZZ\subset \mathbb{Z}_p$, hence they define a scheme $\mc{X}\rightarrow\operatorname{Spec}\mathbb{Z}_p$ whose generic fiber is $X_p$. The variety $X_p$ is aso a fiber of the family $X_t$, $t\in\mathbb{Z}$, and its reduction modulo $p$ is the same as reduction of $X_0$. We shall denote $X_\infty:=X_p$.

We construct a scheme $\mc{Y}$ whose generic fiber $Y_\infty$ is a resolution of $X_\infty$ (see Theorem \ref{resolution of octic}), while the central fiber $Y_0$ is a simple normal crossing divisor with rigid Calabi-Yau threefold as an irreducible component. A Calabi-Yau threefold $X$ is called \emph{rigid} if it has no deformations of the complex structure. By Bogomolov-Tian-Todorov theorem it is equivalent to $b_3(X)=2$.

We start resolving the generic fiber of $\mc{X}$ by the following sequence of blow-ups:
\begin{enumerate}
 \item Let $\sigma_1 \colon \PP^{(1)} \rr \PP^3$ be the blow-up in the triple line $l_3$ and in all fourfold points which do not lie on $l_3$. Let $\mc{D}^{(1)}$ be the strict transform of the branching locus $\mc{D}$ plus the exceptional divisor $H = \PP^1 \times \PP^1$ of the triple line. We define $\mc{X}^{(1)}$ as the double cover of $\PP^{(1)}$ branched along $\mc{D}^{(1)}$. The branch divisor of $\mc{X}^{(1)}$ contains 8 new double lines coming from intersections of $H$ with the 8 planes in the branching divisor; denote these lines as $m_i := P_i \cap H,i=1, ... ,8$. The lines $m_1,m_2,m_3$ are in one ruling of $H$, while the other 5 lines are in the second ruling. In local coordinates near the line $m_1$, the scheme $\mc{X}^{(1)}$ is given by equation
 \[
 u^2 = xyz(x+2xy+z+p)F(u,x,y,z,w)
 \]
 In this coordinates $x$ corresponds to the exceptional divisor, while $y,z$ and $(x+2xy+z+p)$ to strict transforms of $P_1,\ P_4$ and $P_5$ respectively. Factor $F$, with $F(\bold{0})\neq 0$, corresponds to the other surfaces in the branching divisor.

 \item Let $ \sigma_2 \colon \PP^{(2)} \rr \PP^{(1)}$ be the blow-up of $\PP^{(1)}$ in all double lines other than $m_4,\ m_5$ and $P_4 \cap P_5$. Let $\mc{D}^{(2)}$ be the strict transform of $\mc{D}^{(1)}$. Define $\mc{X}^{(2)}$ as the double cover of $\PP^{(2)}$ branched along $\mc{D}^{(2)}$. Over the line $m_1$, which in local coordinates is given by $x=y=0$, this blow-up can be described in two affine charts as
 \[
 u^2 = yz(x + 2x^2y+z+p) \quad
 u^2 = xz(xy+2xy^2 +z+p)
 \]

 \item Let $ \sigma_3 \colon \PP^{(3)} \rr \PP^{(2)}$ be the blow-up of $\PP^{(3)}$ in double curves $m_4,\ m_5$ and $P_4 \cap P_5$, which are the only remaining singularities of the generic fiber. Let $\mc{D}^{(3)}$ be the strict transform of $\mc{D}^{(2)}$ and let $\mc{X}^{(3)}$ be a double cover of $\PP^{(3)}$ branched along $\mc{D}^{(3)}$. Consider affine charts above $m_1$. In the first one, the branch divisor is a simple normal crossing with smooth components both in the generic, and in the degenerate fiber. Thus by blowing up intersections of the components of the branch divisor we will not introduce any singularities. This is not the case for the second chart, since the intersection $z = xy+2xy^2 + z +p =0$ is singular $\mod p$. In this chart the blow-up is in the ideal $\left(xz,x(xy+2xy^2 +z+p),z(xy+2xy^2 +z+p),u\right)$. To compute this blow-up we take the closure of the graph of the map
 
 \[
 (x,y,z,u) \rr (X,Y,Z,T) = (xz,x(xy+2xy^2 +z+p),z(xy+2xy^2 +z+p),u)
 \]
 By explicit computations we verify that the generic fiber $X^{(3)}_\infty$ is smooth, and the central fiber $X^{(3)}_0$ is singular along the line $L = \{x=z=u=X=Y=Z=0\}$, with four pinch points lying on this line.

 \item Let $\mc{X}^{(4)}$ be the blow-up of $\mc{X}^{(3)}$ in the ideal $(p,I(L))$. This blow-up transforms the degenerate fiber $X^{(3)}_0$ into a union of a rigid Calabi-Yau threefold $R_0$ and a $\mathbb{P}^2$-bundle $V_0\rightarrow L$, intersecting $R_0$ transversally along a conic bundle. Note that $V_0$ lies over the double line $L$, and thus appears with multiplicity two.
 
\end{enumerate}

In suitable coordinates, $\mc{X}^{(4)}$ can be written as $FG^2=p$, where $F$ stands for local equations of the rigid Calabi-Yau threefold $R_0$ and $G$ stands for local equations of the $\mathbb{P}^2$-bundle $V_0$. Let $\pi$ be a uniformizer of $\mathbb{Q}_p[\sqrt{p}]$. To obtain reduced components in the degenerate fiber, first we take a pullback of the family by $p \mapsto \pi^2$, obtaining $\mc{X}^{(5)}:=\mc{X}^{(4)}\times_{\mathbb{Z}_p}\operatorname{Spec}\mathbb{Z}_p[\sqrt{p}]$.

We can write the equation of $\mathcal{X}^{(5)}$ in the form $FG^2 = \pi^2$. Take a double cover of $\mathcal{X}^{(5)}$ branched along $F=0$. It embeds into the zero locus of $FG^2 = \pi^2,\ u^2=F$. The latter is reducible, so consider one of its irreducible components; call it $\mc{Y}$. Its equations are $uG = \pi, u^2=F$, and there is a morphism $\mc{Y} \rr \mc{X}^{(4)}$ sending $(F,G,u) \mapsto (F',G')=(u,G)$, where $F',G'$ denote the local coordinates on $\mc{X}^{(4)}$.

The scheme $\mc{Y}$ is a semi-stable degeneration of $\mc{X}$. Its generic fiber $Y_\infty$ is isomorphic to the generic fiber of $\mathcal{X}^{(4)}$, while the special fiber $Y_0$ has two components: a smooth rigid Calabi-Yau threefold $R_0$ and a quadric bundle $Q_0 \rightarrow L$ which replaced the $\mathbb{P}^2$-bundle $V_0\rightarrow L$. The bundle $Q_0$ has four singular fibers over four pinch-points.

This construction yields the following result:
\begin{theorem}\label{th:con}
There exists a scheme $\mathcal{Y}\rightarrow\operatorname{Spec}\mathbb{Z}_p[\sqrt{p}]$ such that the generic fiber $Y_\infty\rightarrow\operatorname{Spec}\mathbb{Q}_p[\sqrt{p}]$ is isomorphic to $X_\infty$ and the special fiber $Y_0\rightarrow\operatorname{Spec}\mathbb{F}_p$ is a union of a rigid Calabi-Yau threefold $R_0$ and a quadric bundle $Q_0$, intersecting transversally in a conic bundle.
\end{theorem}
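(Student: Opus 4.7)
The construction preceding the theorem already contains every move needed; the plan is to justify that the four-step blow-up tower, followed by the base change $p\mapsto \pi^2$ and the extraction of an irreducible component of a double cover, really produces a scheme with the asserted generic and special fibers.

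First I would verify locally that the three blow-ups $\sigma_1,\sigma_2,\sigma_3$ resolve the generic fiber. On the generic fiber the only singularities of $X_\infty$ are those listed in Section \ref{sec:construction} (triple line, fourfold points, double lines), and Theorem \ref{resolution of octic} guarantees that a sequence of blow-ups of the base followed by a double cover resolves these singularities. I would write down the explicit local equations at the triple line $l_3$, at each fourfold point, and at each double line, track how the double cover behaves after the blow-up, and verify in each affine chart that the resulting double cover is either already smooth or has only the stated new double line $L$ plus pinch points in the special fiber. The charts near $m_1$ displayed in the construction serve as the model calculation; the remaining charts are analogous.

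Next I would analyze the special fiber. At each blow-up $\sigma_i$ I would record what happens modulo $p$: the coincidence of $(0:0:0:1)$ and $(0:0:-t:1)$ at $t=0$ forces one extra fivefold point, and the ideal $(p,I(L))$ at step (4) is precisely what separates the central fiber into the two components $R_0$ and the doubled $\mathbb{P}^2$-bundle $V_0\to L$. The main point to check here is the local equation $FG^2=p$: this requires picking coordinates near the generic point of $L$ in which the degenerate fiber of $\mathcal{X}^{(3)}$ looks like $z(xy+2xy^2+z+p)=0$, and verifying that after the blow-up in $(p,I(L))$ the proper transform of the degenerate fiber acquires the form $R_0\cup 2V_0$ with $V_0$ a $\mathbb{P}^2$-bundle over $L$. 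The four pinch points then correspond to the four singular fibers of $V_0\to L$.

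With the equation $FG^2=p$ in hand, the base change $\mathcal{X}^{(5)}=\mathcal{X}^{(4)}\times_{\mathbb{Z}_p}\mathbb{Z}_p[\sqrt p]$ gives $FG^2=\pi^2$. The scheme $\{FG^2=\pi^2,\ u^2=F\}$ splits as $(uG-\pi)(uG+\pi)=0$, and the component $\mathcal{Y}=\{uG=\pi,\ u^2=F\}$ is smooth over $\operatorname{Spec}\mathbb{Z}_p[\sqrt p]$ away from $Y_0$; its special fiber is cut out by $uG=0$ and $u^2=F$, giving $R_0=\{u^2=F\}$ together with the quadric bundle $Q_0=\{u^2=F,\ G=0\}\to L$, the latter being the double cover of $V_0$ that replaces the doubled $\mathbb{P}^2$-bundle. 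Transversality of $R_0\cap Q_0$ along the conic bundle $\{u=G=F=0\}$ is immediate from the defining equations.

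The hard part will be the claim that $R_0$ is a \emph{smooth rigid} Calabi-Yau threefold. Smoothness I would check from the explicit local equations produced by steps (1)--(4), confirming that the pinch points of $V_0$ are absorbed into $Q_0$ and do not contaminate $R_0$. Rigidity (equivalently $b_3(R_0)=2$) is a global statement; I would defer its detailed proof to the cohomology analysis of Section \ref{sec:crystalline} (cf.\ Theorem \ref{th:coh}), noting that $R_0$ is essentially the same degeneration that appears in the complex family of \cite{CvS}, where rigidity is established. Once smoothness of $\mathcal{Y}$ away from $Y_0$ and the normal-crossing structure of $Y_0=R_0\cup Q_0$ are verified, the theorem follows.
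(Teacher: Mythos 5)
Your proposal is correct and follows essentially the same route as the paper, whose proof of Theorem \ref{th:con} is precisely the explicit blow-up tower $\sigma_1,\sigma_2,\sigma_3$, the blow-up in $(p,I(L))$ yielding the local form $FG^2=p$, the base change $p\mapsto\pi^2$, and the extraction of the component $\{uG=\pi,\ u^2=F\}$ that you describe. Like the paper, you defer rigidity of $R_0$ to the identification with the characteristic-zero components from \cite{CYK,CvS}, so there is nothing substantively different to compare.
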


The irreducible components $R_0$ and $Q_0$ have natural lifts to characteristic zero (see \cite{CYK,CvS}); we denote them by $R_\infty$ and $Q_\infty$. A key observation in the up-coming proof that the Galois action on $H^3_{\acute{e}t}(Y_\infty,\mathbb{Q}_p)$ is crystalline is the following identification:

\begin{theorem}\label{th:coh}
There is an isomorphism
$$H^3_{\acute{e}t}(Q_\infty,\mathbb{Q}_p) \simeq H^1_{\acute{e}t}(E,\mathbb{Q}_p)(-1),$$
where $E$ is an elliptic curve with $j$-invariant $1728$.
\end{theorem}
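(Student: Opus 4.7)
The plan is to compute $H^3_{\acute{e}t}(Q_\infty,\mathbb{Q}_p)$ via the Leray spectral sequence for the quadric bundle $\pi\colon Q_\infty\rightarrow L$, where $L\cong\PP^1$. I would first identify the higher direct image sheaves $R^q\pi_*\mathbb{Q}_p$. Every fibre of $\pi$ is either a smooth quadric surface $\PP^1\times\PP^1$ (at generic points) or a quadric cone (at the four points $\Delta\subset L$ lying above the pinch-points of $Q_0$); in both cases it is a rational surface with vanishing first and third étale cohomology. Hence $R^1\pi_*=R^3\pi_*=0$, and the only term on the $E_2$-page contributing to $H^3$ is $E_2^{1,2}=H^1(L,R^2\pi_*\mathbb{Q}_p)$, since $E_2^{3,0}=H^3(\PP^1,\mathbb{Q}_p)=0$ and the remaining entries vanish for the reasons above. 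So $H^3_{\acute{e}t}(Q_\infty,\mathbb{Q}_p)\cong H^1(L,R^2\pi_*\mathbb{Q}_p)$.

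Next I would analyse the constructible sheaf $\mathcal{F}:=R^2\pi_*\mathbb{Q}_p$. By proper base change its stalk is free of rank $2$ at a point of $L\setminus\Delta$ (spanned by the classes of the two rulings of $\PP^1\times\PP^1$) and free of rank $1$ at a point of $\Delta$ (the hyperplane class of the quadric cone). In a one-parameter smoothing of a quadric cone the two rulings of the nearby smooth fibres merge into the single ruling through the vertex, so the monodromy of $\mathcal{F}|_{L\setminus\Delta}$ around each point of $\Delta$ swaps the two rulings. These data identify $\mathcal{F}$ with $\rho_*\mathbb{Q}_p(-1)$, where $\rho\colon E\rightarrow L$ is the double cover of $L$ branched along $\Delta$; the Tate twist records that $H^2$ of a quadric is generated by algebraic cycles (hyperplane sections). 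By construction $\Delta$ is exactly the set of four points on $L\cong\PP^1$ whose double cover was defined to be $E$ at the start of section \ref{sec:construction}.

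Granting this identification, the result follows at once. Since $\rho$ is finite, $R^i\rho_*\mathbb{Q}_p=0$ for $i>0$, so Leray for $\rho$ yields
\[
H^3_{\acute{e}t}(Q_\infty,\mathbb{Q}_p)\cong H^1(L,\rho_*\mathbb{Q}_p(-1))\cong H^1_{\acute{e}t}(E,\mathbb{Q}_p)(-1).
\]
The main obstacle is turning the stalk-and-monodromy match $\mathcal{F}\cong\rho_*\mathbb{Q}_p(-1)$ into a canonical isomorphism of $G_K$-equivariant sheaves on $L$. To secure this I would construct $\rho$ intrinsically from the family $\pi$, for instance as the Stein factorisation of the relative Fano scheme of lines $F_1(Q_\infty/L)\rightarrow L$ (which generically parametrises the two rulings and which collapses the single family of rulings on each quadric cone to a point), or equivalently as the double cover of $L$ cut out by the discriminant of the rank-$4$ quadratic form defining $\pi$. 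Both descriptions are intrinsic to the family and defined over the base field, so the resulting isomorphism is automatically $G_K$-equivariant, and they identify the elliptic curve so obtained with the one constructed from the four planes $P_1,P_2,P_3,P$ through $l_3$, which has $j$-invariant $1728$.
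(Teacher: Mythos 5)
Your proposal is correct, and it follows the paper for the first half: the same Leray spectral sequence for $\pi\colon Q_\infty\to L$, the same vanishing of $R^1\pi_*$ and $R^3\pi_*$, and the same reduction to $H^3_{\acute{e}t}(Q_\infty,\mathbb{Q}_p)\simeq H^1(L,R^2\pi_*\mathbb{Q}_p)$. Where you diverge is in identifying $R^2\pi_*\mathbb{Q}_p$ with $\tau_*\mathbb{Q}_E(-1)$. The paper invokes Tsen's theorem to produce a section $s\colon L\to Q_\infty$, forms the conic bundle $V$ of lines through $s$, observes $R^2\pi_*\mathbb{Q}_Q=R^2\pi_*\mathbb{Q}_V$, and then splits $V\times_L E$ into components to realize the pushforward from $E$ directly. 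You instead argue via stalks and monodromy of the constructible sheaf and then make the identification canonical through the Stein factorisation of the relative Fano scheme of lines (equivalently, the discriminant double cover of the rank-$4$ quadratic form). Your route is the classical intrinsic description of a quadric surface bundle, requires no choice of section, and makes Galois-equivariance automatic; its one genuine subtlety, which you correctly flag, is that matching stalks and monodromy on $L\setminus\Delta$ does not by itself determine the sheaf --- one must also check that $R^2\pi_*\mathbb{Q}_p$ agrees with $j_*$ of its restriction (i.e.\ that the cospecialization map from $H^2$ of a quadric cone onto the monodromy invariants of the nearby fibre is an isomorphism, which holds because the hyperplane class of the cone maps to $l_1+l_2$). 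The paper's section-plus-fibre-product device sidesteps exactly this verification at the cost of an appeal to Tsen's theorem and an auxiliary choice. Both arguments rely equally on the unproved assertion from Section \ref{sec:construction} that the resulting double cover is the curve with $j$-invariant $1728$, so you are on par with the paper there.
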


\begin{proof}

Let $Q:=Q_\infty$ and let $\QQ_Q$, resp. $\QQ_E$, denote the locally constant sheaf $\QQ_p$ on $Q$, resp. on $E$. The second page of the Leray spectral sequence for the morphism $\pi \colon Q \rr L$, $L\simeq\mathbb{P}^1$, is:
\begin{center}
\begin{tikzcd}[row sep=small]

H^0(\PP^1 , R^4\pi_*\QQ_Q) \arrow[rrd]& H^1(\PP^1 , R^4\pi_*\QQ_Q) & H^2(\PP^1 , R^4\pi_*\QQ_Q) \\
H^0(\PP^1 , R^3\pi_*\QQ_Q) \arrow[rrd]& H^1(\PP^1 , R^3\pi_*\QQ_Q) & H^2(\PP^1 , R^3\pi_*\QQ_Q) \\
H^0(\PP^1 , R^2\pi_*\QQ_Q) \arrow[rrd]& H^1(\PP^1 , R^2\pi_*\QQ_Q) & H^2(\PP^1 , R^2\pi_*\QQ_Q) \\
H^0(\PP^1 , R^1\pi_*\QQ_Q) \arrow[rrd]& H^1(\PP^1 , R^1\pi_*\QQ_Q) & H^2(\PP^1 , R^1\pi_*\QQ_Q) \\
H^0(\PP^1 , R^0\pi_*\QQ_Q) & H^1(\PP^1 , R^0\pi_*\QQ_Q) & H^2(\PP^1 , R^0\pi_*\QQ_Q)

\end{tikzcd}
\end{center}
The sheaves $R^1\pi_*\QQ_Q$ and $R^3\pi_*\QQ_Q$ are zero sheaves and consequently the appropriate rows in the spectral sequence vanish. This sequence degenerates on the second page; hence $H^1(L, R^2\pi_*\QQ_Q) \simeq H^3_{\acute{e}t}(Q,\mathbb{Q}_p)$. We need to show that $H^1(L, R^2\pi_*\QQ_Q) \simeq H^1_{\acute{e}t}(E,\mathbb{Q}_p)(-1)$, where elliptic curve $E$ is the double cover of $L$ branched along four pinch points of $Q$. We denote the covering map by $\tau:E \rr L$.

Sections of the sheaf $R^2\pi_*\QQ_Q$ are generated by rulings on the fibers of $Q$. Let $P \in Q$ be a point which does not lie on a singularity of a fiber, and let $l(P)$ denote the lines through $P$ contained in the fiber $\pi^{-1}(\pi(P))$. When $P$ lies on a smooth fiber, there are two such lines. When $P$ lies on a singular fiber, there is one such line with multiplicity two. 

As a bundle of quadric surfaces over $L$, $Q$ is given by a degree two polynomial over the field of rational functions on $L$. By Tsen's theorem (see \cite{Tsen}) there exists a section $s \colon L \rr Q$. Let $V := \left\{\left(l(s(P)),P\right) \colon P \in L\right\}$. The variety $V$ is a bundle of singular conics over $L$. Its generic fiber consists of two intersecting lines, and its degenerate fibers are double lines. Since the degenerate fibers of $V$ lie over the same points of $L$ as the degenerate fibers of $Q$, the bundles have the same determinants. Moreover, $R^2\pi_*\QQ_Q = R^2\pi_*\QQ_V$.

Consider the fiber product $V \times_{L} E$. The map $\tau:E\rightarrow L$ is a double cover branched over the vanishing locus of the determinant of $V$. Since the generic fiber of $V \times_{L} E\rightarrow L$ consists of two identical pairs of intersecting lines, the determinant of the pullback $\tau^*V = V \times_{L} E$ is a square and $V \times_{L} E$ is reducible. Let $S$ be one of its irreducible components.

Since $E$ is irreducible, the projection $\alpha \colon S \rr E$ is surjective and the fiber of $S\rightarrow L$ over a generic point consists of two lines. The other projection $\beta \colon S \rr V$ is also surjective, hence $R^2\pi_*\QQ_V = R^2(\pi \beta)_*\QQ_S = R^2(\tau \alpha)_*\QQ_S = \tau_* \QQ_E \otimes H^2(\PP^1) = \tau_* \QQ_E(-1)$.
\end{proof}

\section{Crystalline cohomology}\label{sec:crystalline}

Let us fix some notation for the rest of the paper. $K$ is a $p$-adic field containing $\sqrt{p}$, $\mathcal{O}_K$ is its ring of integers, $\mathbb{F}_q$ is its residue field. For a scheme $\mathcal{A}\rightarrow\operatorname{Spec}\mathcal{O}_K$, we denote by $A_\infty\rightarrow\operatorname{Spec}K$ its generic fiber and by $A_0\rightarrow \operatorname{Spec}\mathbb{F}_q$ its special fiber. $\mathcal{X}$ is the family of double octics defined by (\ref{double-octic}); $\mathcal{Y}$ is as in Theorem \ref{th:con}; $\mathcal{R}$ is the rigid Calabi-Yau threefold and $\mathcal{Q}$ is the quadric bundle, both described in Theorem \ref{th:con} and paragraphs below it.

In this section, we prove the first part of our Main Theorem: the Galois representation
\[
G_{K}\longrightarrow \operatorname{Aut}\left( H^3_{\acute{e}t}(Y_\infty,\mathbb{Q}_p )\right)
\]
is crystalline. In particular, the action on the $\ell$-adic cohomology for $\ell\neq p$ is unramified.

For the sake of completeness let us state the definitions.

\begin{definition}
Let $K$ be a $p$-adic field with residue field $k$, $G_K$ and $G_k$ their absolute Galois groups and $I_K$ the inertia subgroup, i.e. the kernel of the natural homomorphism $G_K\rightarrow G_k$. Let $V$ be a $\mathbb{Q}_\ell$-adic vector space.

A representation $\rho: G_K\rightarrow \mathrm{GL}(V)$ is called \emph{unramified} if $\rho(I_K)=\left\{\operatorname{Id}_V\right\}$.

Assume $\ell=p$. A representation $\rho: G_K\rightarrow \mathrm{GL}(V)$ is called \emph{crystalline} if the equality \mbox{$\dim_{K_0}\left(B_{cris}\otimes_{\mathbb{Q}_p}V\right)^{G_K}=\dim_{\mathbb{Q}_p}V$} holds, where $B_{cris}$ is the Fontaine-Illusie-Kato crystalline module and $K_0$ is the field of fractions of Witt vectors.
\end{definition}

In our proof we follow the strategy from \cite{CvS}, adapting complex geometry arguments to the $p$-adic setting. For example, existence of the limiting mixed Hodge structure at a singular point of a Picard-Fuchs operator corresponds to the existence of \emph{logarithmic structure} $H^j_{log-cryst}(Y_0/W)$ on $Y_0$. The exact definition of this structure can be found in \cite{Kato}.

Let $S:=(\operatorname{Spec}(\mathbb{F}_q),\mathbb{N}\oplus\mathbb{F}_q^*)$, let $W=W(\mathbb{F}_q)$ be the Witt ring and $K_0$ its field of fractions. Let $Z_0/S$ be a proper SNCL variety of pure dimension, and finally let $Z_0^{(j)}$ denote the disjoint union of $j$-fold intersections of the irreducible components of $Z_0$.

Proof of our Main Theorem relies on the \emph{$p$-adic Clemens-Schmidt sequence}:
\begin{lemma}[Theorem 3.6, \cite{Nakka}]\label{l:nakka}
In the setup as above, the spectral sequence
$$
E^{-k,h+k}_1=\bigoplus_{j\geq \max\{-k,0\}}H^{h-2j-k}_{cryst}(Z_0^{(2j+k+1)}/W)(-j-k)
$$
$$
\Longrightarrow H^h_{log-cryst}(Z_0/W)
$$
degenerates at $E_2$ modulo torsions.
\end{lemma}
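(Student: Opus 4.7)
The plan is to construct a Steenbrink-type weight spectral sequence on the log de Rham--Witt complex of $Z_0/S$, identify its $E_1$ page with the crystalline cohomology of the intersection strata, and then deduce $E_2$-degeneration modulo torsion from Frobenius purity. This is a $p$-adic analog of Steenbrink's limit mixed Hodge spectral sequence for a complex semistable degeneration.

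First I would exhibit a suitable weight filtration $P$ on the log de Rham--Witt complex $W\omega^\bullet_{Z_0/S}$ of Hyodo--Kato. Following Mokrane in the one-parameter case and Nakkajima in general, one builds $P$ using the pole-order filtration coming from the log structure on $S$. The Poincar\'e residue map then identifies each associated graded $\operatorname{gr}^P_k$ with a shift of the de Rham--Witt complex of $Z_0^{(k+1)}$, Tate twisted appropriately. Taking hypercohomology and reindexing produces a spectral sequence with the stated $E_1$ page, abutting to $H^\bullet_{log-cryst}(Z_0/W)$ via the Hyodo--Kato comparison.

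The second, and main, step is $E_2$-degeneration after $\otimes\QQ$. Arguing as Deligne does for the weight spectral sequence of open smooth varieties, each summand $H^{h-2j-k}_{cryst}(Z_0^{(2j+k+1)}/W)(-j-k)$ of $E_1^{-k,h+k}$ is pure of Frobenius weight $h+k$ by the Weil conjectures for smooth projective varieties over $\mathbb{F}_q$ (Katz--Messing). Any differential $d_r$ with $r\geq 2$ would then map a pure piece of one Frobenius weight to a piece of strictly smaller weight, and hence must vanish rationally by Frobenius equivariance.

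The hard part is verifying that the Frobenius on each $E_1$ summand is genuinely induced by the absolute Frobenius on the smooth proper variety $Z_0^{(2j+k+1)}$, twisted by the stated Tate factor. This compatibility requires tracing the construction of $P$ and the Poincar\'e residue through the log de Rham--Witt formalism carefully; once it is in place, purity and hence $E_2$-degeneration modulo torsion follow formally.
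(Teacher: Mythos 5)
The paper offers no proof of this lemma: it is imported verbatim as Theorem~3.6 of \cite{Nakka}, so the only ``proof'' to compare against is the one in the cited source. Your outline --- the weight filtration on the Hyodo--Kato log de Rham--Witt complex, the Poincar\'e residue identification of $\operatorname{gr}^P$ with (Tate-twisted, shifted) de Rham--Witt complexes of the strata $Z_0^{(k+1)}$, and $E_2$-degeneration modulo torsion from Katz--Messing purity (each summand of $E_1^{-k,h+k}$ is pure of weight $h+k$, so any Frobenius-equivariant $d_r$, $r\geq 2$, maps between distinct pure weights and vanishes after $\otimes\,\QQ$) --- is exactly the Mokrane--Nakkajima argument that the citation points to, including your correct identification of the Frobenius compatibility of the residue isomorphism as the technical crux.
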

Lemma \ref{l:nakka} allows us to compute $H^h_{log-cryst}(Z_0/W)$ but we need to compute $H^3_{\acute{e}t}(Z_\infty,\mathbb{Q}_p)$. For this purpose we use a semi-stable comparison theorem between \'etale cohomology and logarithmic structure. It works with coefficients in the ring of semi-stable periods $B_{st}$. This ring contains $B_{cris}$ and $B_{cris}=B_{st}^{N=0}$, where $N$ is the monodromy operator. In general, we have inequalities $\dim_{K_0}\left(B_{cris}\otimes_{\mathbb{Q}_p}V\right)^{G_K}\leq \dim_{K_0}\left(B_{st}\otimes_{\mathbb{Q}_p}V\right)^{G_K}\leq\dim_{\mathbb{Q}_p}V$.

\begin{lemma}[Theorem 0.2, \cite{Tsuji}]\label{l:tsuji}
In the setup as above, there is an isomorphism
$$
B_{st}\otimes_{\mathbb{Q}_p}H_{\acute{e} t}(Z_\infty,\mathbb{Q}_p)\simeq B_{st}\otimes_{W}H_{log-cryst}(Z_0/W)
$$
compatible with all the natural structures.
\end{lemma}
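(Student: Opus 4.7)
Lemma~\ref{l:tsuji} is Tsuji's semi-stable comparison theorem $C_{st}$, conjectured by Fontaine--Jannsen and proved in full generality in \cite{Tsuji}. It is a deep result whose full proof requires substantial $p$-adic Hodge theory, so I can only outline a strategy rather than produce a self-contained argument here.

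The plan is to introduce a third cohomology theory --- log-syntomic cohomology of Fontaine--Messing--Kato --- which bridges the étale and log-crystalline sides. For each $n,r\geq 0$ one defines a log-syntomic sheaf $\mc{S}_n(r)$ on $Z_0$, essentially as the mapping fiber of $1-\varphi/p^r$ on a Nygaard-filtered truncation of the log-crystalline de Rham--Witt complex, together with a natural Fontaine--Messing period map
\[
\mc{S}_n(r)\;\lra\; i^{*}Rj_{*}\,(\ZZ/p^n)(r),
\]
where $j:Z_\infty\hookrightarrow \mc{Z}$ and $i:Z_0\hookrightarrow \mc{Z}$ are the generic and special fibre inclusions. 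First I would dispatch the easier half: by the very definition of $\mc{S}_n(r)$ together with Kato's comparison between log-crystalline and log-de Rham cohomology, tensoring the left-hand side with $B_{st}$ and passing to the inverse limit over $n$ recovers $B_{st}\otimes_{W}H_{log-cryst}(Z_0/W)$ with its Frobenius, Hodge filtration and monodromy operator.

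The hard part, and the main obstacle, is to prove that the Fontaine--Messing period map is a quasi-isomorphism (up to controlled torsion) in the semi-stable case; this is precisely the technical core of \cite{Tsuji}. It reduces to computing the $p$-adic nearby cycles $i^{*}Rj_{*}(\ZZ/p^n)(r)$ and identifying them with syntomic cohomology. Fontaine--Messing had handled the smooth case in a restricted range of twists, and Kato extended this to log-smooth morphisms for small $r$; Tsuji removes these restrictions by a delicate local analysis combining Kato's symbol map with the Bloch--Kato identification of $p$-adic vanishing cycles with Milnor $K$-theory modulo $p^n$. Once this quasi-isomorphism is in hand, the claimed comparison follows formally by tensoring with $B_{st}$ and taking limits, with compatibility with the Galois action, Frobenius, filtration and monodromy all built into the construction. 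In the present paper one simply uses the lemma as a black box, feeding in the explicit semi-stable model $\mc{Y}$ from Theorem~\ref{th:con} together with Lemma~\ref{l:nakka} and Theorem~\ref{th:coh} to extract the structure of $H^3_{\acute{e}t}(Y_\infty,\QQ_p)$.
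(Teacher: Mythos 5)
The paper gives no proof of this lemma at all --- it is quoted directly as Theorem 0.2 of \cite{Tsuji} and used as a black box, exactly as you note in your final sentence. Your outline of Tsuji's actual argument (log-syntomic cohomology, the Fontaine--Messing period map, and the identification of $p$-adic nearby cycles via Kato's symbol map and the Bloch--Kato computation) is a faithful summary of how the cited result is proved, so there is nothing to correct.
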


Using Lemmas \ref{l:nakka} and \ref{l:tsuji}, as well as the geometric description from Section \ref{sec:construction}, we can restate the calculations from Section 3.2 of \cite{CvS} in mixed characteristic.

\begin{proposition}
There is a direct sum decomposition
$$
B_{st}\otimes_{\mathbb{Q}_p}H_{\acute{e} t}^3(Y_\infty,\mathbb{Q}_p)\simeq B_{st}\otimes_{\mathbb{Q}_p}\left(H_{\acute{e} t}^3(R_\infty,\mathbb{Q}_p)\oplus H_{\acute{e} t}^1(E,\mathbb{Q}_p)(-1)\right),
$$
where $R_\infty$ is a rigid Calabi-Yau threefold and $E$ is an elliptic curve.
\end{proposition}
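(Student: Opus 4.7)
The plan is to rephrase the claim in crystalline language and then compute by a weight spectral sequence. By Lemma \ref{l:tsuji}, $B_{st}\otimes_{\QQ_p}H^3_{\acute{e}t}(Y_\infty,\QQ_p)\simeq B_{st}\otimes_W H^3_{log-cryst}(Y_0/W)$. Since $R_\infty$, $Q_\infty$ and $E$ are smooth and proper over $K$ with good reduction, the classical crystalline comparison theorem identifies each of their $\ell=p$ \'etale cohomology groups with the corresponding crystalline cohomology of the special fibres. Combining this with Theorem \ref{th:coh} to identify $H^3_{cryst}(Q_0/W)$ with $H^1_{cryst}(E_0/W)(-1)$, the proposition reduces to producing an isomorphism
$$H^3_{log-cryst}(Y_0/W)\simeq H^3_{cryst}(R_0/W)\oplus H^3_{cryst}(Q_0/W)$$
at least after tensoring with $B_{st}$.

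I would compute the left-hand side using the weight spectral sequence of Lemma \ref{l:nakka}. Only two strata are nonempty: $Y_0^{(1)}=R_0\sqcup Q_0$ and $Y_0^{(2)}=C_0$, where $C_0:=R_0\cap Q_0$ is the conic bundle of Theorem \ref{th:con}. Direct inspection of the indexing shows that the three $E_1$ entries of total degree three that can be nonzero are
$$E_1^{-1,4}=H^2_{cryst}(C_0/W)(-1),\quad E_1^{0,3}=H^3_{cryst}(R_0/W)\oplus H^3_{cryst}(Q_0/W),\quad E_1^{1,2}=H^2_{cryst}(C_0/W);$$
every neighbouring entry that could touch $E^{0,3}_1$ vanishes, because it involves $H^1_{cryst}(C_0)$ or $H^3_{cryst}(C_0)$ --- both zero since $C_0$ is a conic bundle over $\PP^1$ with rational fibres --- or $H^*_{cryst}(Y_0^{(3)})$, which is zero as $Y_0^{(3)}=\emptyset$.

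It remains to show that $E_2^{1,2}=E_2^{-1,4}=0$. The relevant differentials are the Mayer--Vietoris restriction $d_1:H^2_{cryst}(R_0)\oplus H^2_{cryst}(Q_0)\to H^2_{cryst}(C_0)$, whose cokernel is $E_2^{1,2}$, and the Gysin push-forward $d_1:H^2_{cryst}(C_0)(-1)\to H^4_{cryst}(R_0)\oplus H^4_{cryst}(Q_0)$, whose kernel is $E_2^{-1,4}$. These two maps are Poincar\'e dual, so it is enough to establish the Key Claim that the restriction is surjective. I would prove this by exhibiting generators of $H^2_{cryst}(C_0)$ as algebraic cycle classes restricted from either $R_0$ or $Q_0$: the cohomology of a smooth conic bundle over $\PP^1$ is spanned by the class of a general fibre and by the irreducible components of the finitely many reducible fibres, and the explicit construction of Section \ref{sec:construction} realises each of these as the restriction of an exceptional divisor coming from the blow-ups $\sigma_1,\dots,\sigma_4$ inside $R_0$, or of a ruling of the quadric fibres of $Q_0$.

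The main obstacle is the Key Claim; every other step is formal, being either a direct reading of the indexing in Lemma \ref{l:nakka} or an application of Lemma \ref{l:tsuji}, smooth and proper base change and Theorem \ref{th:coh}. Once the Key Claim is in place, the spectral sequence collapses on the total-degree-three column to $E_\infty^{0,3}=E_1^{0,3}$, and tensoring with $B_{st}$ yields the decomposition asserted in the proposition; as a side benefit the monodromy operator $N$ is then forced to act trivially on $B_{st}\otimes_{\QQ_p}H^3_{\acute{e}t}(Y_\infty,\QQ_p)$, which sets the stage for the crystalline conclusion of the Main Theorem.
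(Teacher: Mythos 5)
Your set-up coincides with the paper's: both proofs invoke Lemma \ref{l:tsuji} to pass to $H^3_{log-cryst}(Y_0/W)$, run the Nakkajima spectral sequence of Lemma \ref{l:nakka} on the two strata $Y_0^{(1)}=R_0\sqcup Q_0$ and $Y_0^{(2)}=C_0$, observe that $H^1(C_0)=H^3(C_0)=0$ kills the differentials touching $E_1^{0,3}$, and use Theorem \ref{th:coh} to convert $H^3(Q_\infty)$ into $H^1(E)(-1)$. The divergence is in how the remaining total-degree-three entries $E_2^{1,2}$ and $E_2^{-1,4}$ are killed. The paper does this by a dimension count: $\dim_{\mathbb{Q}_p}H^3_{\acute{e}t}(Y_\infty,\mathbb{Q}_p)=b_3(Y_\infty)=4$ is imported from the complex-geometric computation of \cite{CvS}, and since the surviving summand $E_\infty^{0,3}$ already has dimension $b_3(R_\infty)+b_3(Q_\infty)=2+2=4$, the other graded pieces have no room to be nonzero. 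You instead propose to prove directly that the restriction $d_1\colon H^2_{cryst}(R_0)\oplus H^2_{cryst}(Q_0)\to H^2_{cryst}(C_0)$ is surjective and to dualize. That statement is true (it is a consequence of the paper's count), but as written it is a genuine gap: you flag it yourself as the ``Key Claim'' and only sketch it. Moreover the sketch is incomplete even as a plan: for a conic bundle over $\PP^1$ with $k$ reducible fibres one has $b_2(C_0)=2+k$, and the fibre class together with the components of the reducible fibres spans only a rank $1+k$ sublattice; you must also account for a class transverse to the fibres (a section or the relative hyperplane class $\mathcal{O}_{Q_0}(1)|_{C_0}$), and then verify, from the explicit blow-up coordinates of Section \ref{sec:construction}, that every one of these generators really is a restriction from $R_0$ or $Q_0$. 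This is doable but is precisely the computation the paper's argument is designed to avoid.

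If you want to complete your route, the honest comparison is this: your approach is self-contained on the special fibre and would not need the characteristic-zero input $b_3(Y_\infty)=4$ from \cite{CvS}, at the cost of a concrete cycle-theoretic verification on $C_0$; the paper's approach is essentially formal once that Betti number is granted. The cheapest fix for your write-up is simply to replace the Key Claim by the dimension count: $E_\infty^{1,2}$ and $E_\infty^{-1,4}$ contribute nonnegatively to $\dim H^3_{log-cryst}(Y_0/W)$, which Lemma \ref{l:tsuji} pins at $4$, already exhausted by $E_\infty^{0,3}$.
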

\begin{proof}
Recall that $Y_\infty$ is the generic fiber of the projective, semi-stable scheme $\mathcal{Y}\rightarrow\operatorname{Spec}\mathcal{O}_K$ with central fiber $Y_0$. In particular, $Y_0$ is a proper SNCL variety. In the notation of Lemma \ref{l:tsuji}, we have $Y^{(1)}_0=R_0\sqcup Q_0$, $C_0:=Y^{(2)}_0=R_0\cap Q_0$ is a conic bundle, $Y_0^{(j)}=\varnothing$ for $j\geq 3$.

The natural lifts $\mathcal{R},\mathcal{Q}$ and $\mathcal{C}$ are all smooth. Three applications of Lemma \ref{l:tsuji} show that the first page of the $p$-adic Clemens-Schmidt spectral sequence reads:
\begin{center}
\begin{tikzcd}
B_{st}\otimes H^4_{\acute{e}t}(C_\infty,\mathbb{Q}_p)(-1) \arrow[r] & B_{st}\otimes H_{\acute{e}t}^6(R_\infty,\mathbb{Q}_p)\oplus B_{st}\otimes H_{\acute{e}t}^6(Q_\infty,\mathbb{Q}_p) & \ \\ 
B_{st}\otimes H^3_{\acute{e} t}(C_\infty,\mathbb{Q}_p)(-1) \arrow[r] & B_{st}\otimes H_{\acute{e} t}^5(R_\infty,\mathbb{Q}_p)\oplus B_{st}\otimes H_{\acute{e} t}^5(Q_\infty,\mathbb{Q}_p) & \ \\ 
B_{st}\otimes H^2_{\acute{e}t}(C_\infty,\mathbb{Q}_p)(-1) \arrow[r] & B_{st}\otimes H_{\acute{e}t}^4(R_\infty,\mathbb{Q}_p)\oplus B_{st}\otimes H_{\acute{e}t}^4(Q_\infty,\mathbb{Q}_p) \arrow[r] & B_{st}\otimes H^4_{\acute{e}t}(C_\infty,\mathbb{Q}_p)\ \\
B_{st}\otimes H^1_{\acute{e}t}(C_\infty,\mathbb{Q}_p)(-1) \arrow[r] & B_{st}\otimes H_{\acute{e}t}^3(R_\infty,\mathbb{Q}_p)\oplus B_{st}\otimes H_{\acute{e}t}^3(Q_\infty,\mathbb{Q}_p) \arrow[r] & B_{st}\otimes H^3_{\acute{e}t}(C_\infty,\mathbb{Q}_p) \\
B_{st}\otimes H^0_{\acute{e}t}(C_\infty,\mathbb{Q}_p)(-1) \arrow[r] & B_{st}\otimes H_{\acute{e}t}^2(R_\infty,\mathbb{Q}_p)\oplus B_{st}\otimes H_{\acute{e}t}^2(Q_\infty,\mathbb{Q}_p) \arrow[r] & B_{st}\otimes H^2_{\acute{e}t}(C_\infty,\mathbb{Q}_p) \\
\ & B_{st}\otimes H_{\acute{e}t}^1(R_\infty,\mathbb{Q}_p)\oplus B_{st}\otimes H_{\acute{e}t}^1(Q_\infty,\mathbb{Q}_p)\arrow[r] \arrow[r] & B_{st}\otimes H^1_{\acute{e}t}(C_\infty,\mathbb{Q}_p)\ \\
\ & B_{st}\otimes H_{\acute{e}t}^0(R_\infty,\mathbb{Q}_p)\oplus B_{st}\otimes H_{\acute{e}t}^0(Q_\infty,\mathbb{Q}_p)\arrow[r]\arrow[r] & B_{st}\otimes H^0_{\acute{e}t}(C_\infty,\mathbb{Q}_p)\\
\end{tikzcd}
\end{center}
By Lemma \ref{l:nakka} the spectral sequence degenerates at the second page and converges to $B_{st}\otimes H^h_{log-cryst}(Y_0/W)$. Thus the latter has $B_{st}\otimes\left(H_{\acute{e}t}^3(R_\infty,\mathbb{Q}_p)\oplus H^3_{\acute{e}t}(Q_\infty,\mathbb{Q}_p)\right)$ as a direct summand. By Theorem \ref{th:coh} there is an elliptic curve $E$ such that $H^3_{\acute{e}t}(Q_\infty,\mathbb{Q}_p)\simeq H^1_{\acute{e}t}(E,\mathbb{Q}_p)(-1)$. In particular, $b_3(Q_\infty)=b_1(E)=2$. We can check equality $b_3(Y_\infty)=4$ in characteristic zero, where it is part of the main Theorem of \cite{CvS}. Since $R_\infty$ is rigid, $b_3(R_\infty)=2$. Lemma \ref{l:tsuji} and dimension count imply that $B_{st}\otimes H^h_{log-cryst}(Y_0/W)\simeq B_{st}\otimes\left(H_{\acute{e}t}^3(R_\infty,\mathbb{Q}_p)\oplus H^3_{\acute{e}t}(Q_\infty,\mathbb{Q}_p)\right)$. Using Lemma \ref{l:tsuji} again gives the claim.
\end{proof}

Now we prove the first part of the Main Theorem.

\begin{theorem}\label{th:cryst}
The Galois action on the cohomology group $H^3_{\acute{e}t}(Y_\infty,\mathbb{Q}_p)$ is crystalline.
\end{theorem}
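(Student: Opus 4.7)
The plan is to bootstrap the $B_{st}$-level isomorphism of the preceding Proposition into full crystallinity by verifying that each summand on the right-hand side is individually crystalline and then exploiting the compatibility of the isomorphism with the monodromy operator $N$. Recall that a semi-stable $G_K$-representation $V$ is crystalline precisely when $N$ acts trivially on $D_{st}(V):=(B_{st}\otimes_{\mathbb{Q}_p}V)^{G_K}$; since Lemma \ref{l:tsuji} applied to the semi-stable scheme $\mathcal{Y}$ already supplies semi-stability of $H^3_{\acute{e}t}(Y_\infty,\mathbb{Q}_p)$, the only remaining task is to prove that $N=0$ on its $D_{st}$.

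I would then verify crystallinity of the two summands one at a time. For $H^3_{\acute{e}t}(R_\infty,\mathbb{Q}_p)$, the natural lift $\mathcal{R}\to\operatorname{Spec}\mathcal{O}_K$ is smooth and proper, so the classical crystalline comparison theorem of Faltings--Tsuji applies directly. For $H^1_{\acute{e}t}(E,\mathbb{Q}_p)(-1)$, the elliptic curve $E$ has $j$-invariant $1728$ and is therefore a twist of $y^2=x^3-x$; it suffices to exhibit an integral model of this twist over $\mathcal{O}_K$ with smooth special fibre, after which crystallinity of $H^1_{\acute{e}t}(E,\mathbb{Q}_p)$ follows and is preserved by the Tate twist. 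With both summands crystalline, $N$ vanishes on their combined $D_{st}$, and transporting this vanishing across the isomorphism of the Proposition (which is compatible with Galois action, Frobenius, filtration and $N$) forces $N=0$ on $D_{st}(H^3_{\acute{e}t}(Y_\infty,\mathbb{Q}_p))$, completing the argument.

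The only non-formal step, and therefore the main obstacle, is the good-reduction verification for $E$. This is precisely where both hypotheses $p>5$ and $K=\mathbb{Q}_p[\sqrt{p}]$ come into play: the specific twist produced by the geometric construction has discriminant whose $p$-adic valuation is absorbed after adjoining $\sqrt{p}$, and the assumption $p>5$ rules out the residue characteristics where curves with $j=1728$ exhibit additional subtleties. Once this concrete but mild computation is carried out, the rest of the argument is a purely formal application of the semi-stable-versus-crystalline dictionary in $p$-adic Hodge theory.
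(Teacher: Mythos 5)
Your proposal is correct and follows essentially the same route as the paper: both arguments pass to the $N=0$ part of the $B_{st}$-isomorphism from the preceding Proposition and invoke crystallinity of $H^3_{\acute{e}t}(R_\infty,\mathbb{Q}_p)$ and $H^1_{\acute{e}t}(E,\mathbb{Q}_p)(-1)$ coming from their smooth integral models; the paper merely phrases the conclusion as a dimension count $\dim_{K_0}\left(B_{cris}\otimes H^3_{\acute{e}t}(Y_\infty,\mathbb{Q}_p)\right)^{G_K}=2+2=4=b_3(Y_\infty)$ rather than via the ``semistable with $N=0$ on $D_{st}$'' criterion. One small correction of emphasis: the hypotheses $p>5$ and $K=\mathbb{Q}_p[\sqrt{p}]$ are needed for the construction of the semistable model $\mathcal{Y}$ (the base change $p\mapsto\pi^2$ and the good behaviour of the plane arrangement modulo $p$), not for the reduction of $E$ --- the four branch points on $\mathbb{P}^1$ are fixed rational points independent of $p$, so $E$ already has good reduction at every $p>3$ without any twisting subtlety.
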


\begin{proof}
Put $H_Y:=H^3_{\acute{e}t}(Y_\infty,\mathbb{Q}_p)$, $H_R:=H^3_{\acute{e}t}(R_\infty,\mathbb{Q}_p)$ and $H_E:=H^1_{\acute{e}t}(E,\mathbb{Q}_p)(-1)$. By the previous proposition there is an isomorphism $B_{st}\otimes H_Y\simeq B_{st}\otimes (H_R\oplus H_E)$. Using the fact that this decomposition respects the monodromy action, we get $B_{cris}\otimes H_Y=(B_{st}\otimes H_Y)^{N=0}\simeq(B_{st}\otimes (H_R\oplus H_E))^{N=0}=(B_{st}\otimes H_R)^{N=0}\oplus (B_{st}\otimes H_E)^{N=0}=(B_{cris}\otimes H_R)\oplus (B_{cris}\otimes H_E)$. $R_\infty$ and $E$ have smooth models, so Galois representations $H_R$ and $H_E$ are crystalline. Thus $\dim_{K_0}\left(B_{cris}\otimes H_R\right)^{G_K}=\dim_{K_0}\left(B_{cris}\otimes H_E\right)^{G_K}=2$ and consequently
\begin{eqnarray*}
\dim_{K_0}\left(B_{cris}\otimes H_Y\right)^{G_K}=\dim_{K_0}\left(B_{cris}\otimes H_R\right)^{G_K}+\dim_{K_0}\left(B_{cris}\otimes H_E\right)^{G_K}=4=b_3(Y_\infty)=\dim_{\mathbb{Q}_p}(H_Y)    
\end{eqnarray*}
\end{proof}

\section{Lack of a smooth model}\label{sec:no-good}

Given Theorem \ref{th:cryst}, showing that $Y_\infty$ does not admit potentially good reduction will conclude the proof of our Main Theorem.

In characteristic zero, bad reduction means that the family over the punctured unit disc cannot be completed to a smooth family. To prove it one can use general theory of degenerations of Calabi-Yau threefolds (see \cite{CvS}). In mixed characteristic we must take a different approach. Let us recall the definitions.

\begin{definition}[Definition 2.1, \cite{BLL}]

\ 

A variety $V$ defined over a $p$-adic field $K$ has \emph{potentially good reduction} if there exists a finite field extension $L/K$ with the ring of integers $\mathcal{O}_L$ and a smooth proper algebraic space \mbox{$\mathcal{V}\rightarrow\operatorname{Spec}\mathcal{O}_L$} flat over $\mathcal{O}_L$ such that $V_\infty$ is isomorphic to $V$ over $L$ and the special fiber $V_0$ is a scheme.

A variety has \emph{bad reduction} if it does not have potentially good reduction.
\end{definition}
\noindent Note that we allow smooth models over fields larger than the field of definition. Thus our notion of bad reduction means \emph{bad reduction over} $\overline{K}$, not only over $K$. Furthermore, we work in the category of \emph{algebraic spaces} rather than schemes, since in the latter the N\'eron-Ogg-Shafarevich criterion can fail even for $K3$ surfaces (see \cite{Matsumoto}).

In the proof we use several general lemmas. Recall that a variety is \emph{ruled} if it is birational to the product $\mathbb{P}^1\times V$ for some variety $V$.

\begin{lemma}\label{lem:ruled}
If $A$ is a Calabi-Yau variety, then it is not ruled.
\end{lemma}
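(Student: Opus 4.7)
The plan is to derive a contradiction between the Kodaira dimension of $A$ and that of any ruled variety. Concretely, I would compare the spaces of pluricanonical sections $H^0(\cdot\,,mK)$ on the two sides of the birational equivalence: on $A$ they are one-dimensional (because $K_A\cong\mathcal{O}_A$), while on $\mathbb{P}^1\times V$ they all vanish.

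First, I would reduce to smooth projective models by replacing $V$ by a smooth projective variety birational to it, so that $\mathbb{P}^1\times V$ is itself smooth projective. The formula $K_{\mathbb{P}^1\times V}\cong \pi_1^*\mathcal{O}_{\mathbb{P}^1}(-2)\otimes\pi_2^*K_V$ together with the K\"unneth formula then yields
\[
H^0\bigl(\mathbb{P}^1\times V,\, mK_{\mathbb{P}^1\times V}\bigr)\;\cong\; H^0\bigl(\mathbb{P}^1,\mathcal{O}(-2m)\bigr)\otimes H^0(V,mK_V)\;=\;0
\]
for every $m\geq 1$, since $\mathcal{O}_{\mathbb{P}^1}(-2m)$ has no global sections.

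Next, I would invoke the birational invariance of plurigenera for smooth projective varieties, valid over an arbitrary base field: any birational map $A\dashrightarrow \mathbb{P}^1\times V$ restricts to an isomorphism outside subsets of codimension at least two on both sides, so pluricanonical sections pull back on a common open subset and then extend by Hartogs to the ambient smooth variety. This forces $H^0(A,mK_A)=0$ for every $m\geq 1$. But $A$ is Calabi-Yau, so $K_A\cong\mathcal{O}_A$ and $H^0(A,mK_A)=H^0(A,\mathcal{O}_A)\neq 0$, yielding the desired contradiction.

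I do not anticipate a genuine obstacle here; the argument is a textbook application of the birational invariance of the Kodaira dimension, and the Calabi-Yau hypothesis enters only through the triviality of $K_A$.
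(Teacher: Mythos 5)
Your argument is correct and is essentially the paper's own proof: the paper simply invokes the birational invariance of the geometric genus $p_g=h^{n,0}$ (Hartshorne, Theorem II.8.19), which is exactly your case $m=1$, noting $p_g(\mathbb{P}^1\times V)=0$ while $p_g(A)=1$. Your version spells out the extension-over-codimension-two mechanism and works with all plurigenera, which is slightly more than needed but changes nothing of substance.
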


\begin{proof}
The geometric genus $p_g(V):=h^{n,0}(V)$ of a variety is a birational invariant (Theorem 8.19, \cite{Hart}). Since we have $p_g(\mathbb{P}^1\times V)=0$ and $p_g(A)=1$, the claim follows.
\end{proof}

\begin{lemma}[Proposition 3, \cite{Ab}]\label{lem:ab}
Let $f:X\rightarrow Y$ be a proper birational morphism with $Y$ regular. Then every irreducible component of the exceptional locus of $f$ is ruled.
\end{lemma}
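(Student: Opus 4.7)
My plan is to show that each exceptional component $E$ is birationally a projective bundle over some subvariety of $Y$, and hence ruled, by localizing around the generic point of the image $Z := \overline{f(E)}$. Since $Y$ is regular (hence normal) and $f$ is proper birational, $f$ is an isomorphism outside a subset of $Y$ of codimension at least two, so $r := \operatorname{codim}_Y Z \geq 2$; the local ring $R := \mc{O}_{Y,\eta}$ at the generic point $\eta$ of $Z$ is then regular of dimension $r$ with residue field $\kappa := k(\eta)$. Ruledness being a birational invariant, it is enough to analyze the base change $X \times_Y \operatorname{Spec} R \to \operatorname{Spec} R$ and to show that the generic fiber of $E \to Z$ is a ruled $\kappa$-variety.

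The strategy is to dominate $X \times_Y \operatorname{Spec} R$ by iterated blow-ups of $\operatorname{Spec} R$ at regular centers. The blow-up at the closed point of $\operatorname{Spec} R$ is again regular, with exceptional divisor $\mathbb{P}^{r-1}_\kappa$; one iterates the process, blowing up further regular subschemes contained in previous exceptional divisors, and each new exceptional divisor is a projective bundle over its center. If the divisorial valuation on $k(X)=k(Y)$ defining $E$ is realized as the generic point of an exceptional component on some such iterated regular blow-up $B \to \operatorname{Spec} R$, then the birational lift of $E$ to $B$ is by construction a projective bundle; projective bundles are ruled, and so $E$ itself is ruled.

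The hard part will be the local uniformization of divisorial valuations on a regular local ring by finite sequences of regular blow-ups. In characteristic zero this would follow from Hironaka's theorem, but in the mixed-characteristic setting we are in, such tools are unavailable. Abhyankar's approach is instead valuation-theoretic and characteristic-free: the Abhyankar inequality, which in our situation yields $\dim E - \dim Z \leq r - 1$ for the divisorial valuation associated to $E$, provides an invariant that allows him to construct the required sequence of regular blow-ups by induction and to prove termination. This is the technical content of Proposition 3 of \cite{Ab}, to which I would refer for the remaining details.
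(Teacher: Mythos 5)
The paper offers no proof of this lemma beyond the attribution: it is quoted as Proposition 3 of Abhyankar's 1956 paper, which is exactly the reference your sketch ultimately falls back on for the technical core (finite realization of the divisorial valuation attached to $E$ by blow-ups of $\operatorname{Spec}\mathcal{O}_{Y,\eta}$ at regular centers). Your surrounding reductions — codimension $\geq 2$ of $\overline{f(E)}$ via normality of $Y$, localization at its generic point, and ruledness of exceptional divisors of regular blow-ups as projectivized normal bundles — are a correct account of how Abhyankar's characteristic-free, valuation-theoretic argument proceeds, so your route coincides with the paper's.
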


\begin{lemma}[Corollary 5.7.14, \cite{RG}]\label{lem:chow}
Let $Y$ be a coherent algebraic space, $X \rr Y$ a separated $Y$-algebraic space of finite type and $U\subset X$ an open subspace quasi-projective over $Y$. Then there exists an $U$-admissible blow-up $\pi \colon \wdt{X} \rr X$ such that $\wdt{X}$ is quasi-projective over $Y$.
\end{lemma}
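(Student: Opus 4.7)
The statement is essentially Chow's lemma in the category of algebraic spaces, so the plan is to imitate the classical graph-closure proof, with care to work in the algebraic-space setting. After routine reductions---Noetherian approximation using coherence of $Y$, followed by passage to irreducible components of $X$ to assume $X$ is integral---I would use the quasi-projectivity of $U \rr Y$ to factor it as $U \hookrightarrow P$ with $U \rr P$ an open immersion and $P \rr Y$ projective (taking the schematic closure of $U$ inside a chosen projective embedding over $Y$).

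Next, form the graph $\Gamma \subset U \times_Y P$ of this factorization and let $\wdt{X}$ denote its scheme-theoretic closure inside $X \times_Y P$. The first projection $\pi \colon \wdt{X} \rr X$ is proper, since it arises as a closed subspace of the projective morphism $X \times_Y P \rr X$, and is an isomorphism above $U$. It can then be identified with a blow-up of $X$ along a coherent ideal sheaf supported in $X \setminus U$ by the usual Rees-algebra description of graph closures; hence $\pi$ is a $U$-admissible blow-up. The second projection $q \colon \wdt{X} \rr P$ gives a morphism to a $Y$-projective scheme: pulling back a relatively ample line bundle from $P$ along $q$ and twisting by a sufficiently large multiple of the exceptional divisor of $\pi$ produces a $Y$-ample line bundle on $\wdt{X}$, by exactly the line-bundle calculation that concludes the classical proof of Chow's lemma. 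This gives quasi-projectivity of $\wdt{X}$ over $Y$.

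The main obstacle is that every step has to be carried out in the category of algebraic spaces rather than schemes. One needs a functorial formalism of scheme-theoretic closure, of blow-ups along coherent ideals, and of relative ampleness for morphisms of algebraic spaces; and crucially one must invoke the non-trivial fact---itself part of the theory developed in \cite{RG}---that a quasi-projective algebraic space over a coherent base is automatically a scheme, so that the final conclusion lands in the expected category. The rest of the proof is then a technical translation of the scheme-theoretic Chow's lemma into this framework.
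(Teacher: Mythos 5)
The paper gives no proof of this lemma: it is quoted verbatim as Corollaire 5.7.14 of \cite{RG}, so the relevant comparison is with Raynaud--Gruson's own argument, which is a genuine application of their flattening theorem (\emph{platification par \'eclatements}), not a formal graph-closure computation. The central gap in your sketch is the sentence asserting that the closure $\widetilde{X}$ of the graph of $U\hookrightarrow P$ inside $X\times_Y P$ ``can be identified with a blow-up of $X$ along a coherent ideal sheaf supported in $X\setminus U$ by the usual Rees-algebra description of graph closures.'' There is no such general description, and this identification is essentially the whole content of the theorem. A proper, even projective, morphism $\widetilde{X}\to X$ that is an isomorphism over $U$ need not be a $U$-admissible blow-up: if $U$ is not schematically dense the graph closure is not even surjective, whereas every $U$-admissible blow-up is; and even in the integral, birational case the Hartshorne II.7.17-type argument realizes a projective modification as a blow-up of an ideal whose vanishing locus is \emph{not} controlled to lie in $X\setminus U$. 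What is true, and what Raynaud--Gruson prove, is the weaker but sufficient statement that such a modification can be \emph{dominated} by a $U$-admissible blow-up; the proof flattens a suitable coherent module by a $U$-admissible blow-up of $X$ and uses that a flat, proper, generically isomorphic morphism is an isomorphism. Without this input your construction produces a modification, not a blow-up.

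There is a second, independent gap: quasi-projectivity of $\widetilde{X}$ over $Y$ is asserted but not established. The graph closure is a closed subspace of $X\times_Y P$, which is projective over $X$ but not quasi-projective over $Y$ (since $X$ itself is not). In the classical proof of Chow's lemma the modification is quasi-projective over $Y$ because the projection to $P_1\times_Y\cdots\times_Y P_n$ is an immersion, and that argument uses crucially that the quasi-projective opens $U_i$ \emph{cover} $X$; here the single open $U$ does not cover $X$, so the second projection $\widetilde{X}\to P$ need not be an immersion, nor even quasi-finite, over the boundary. The ``twist a pulled-back ample bundle by the exceptional divisor'' computation you invoke presupposes exactly the blow-up structure and the control over the center that have not been obtained. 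Your closing remark that a quasi-projective algebraic space over such a base is a scheme is correct and is indeed part of the framework of \cite{RG}, but it does not repair either of the two gaps above.
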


\begin{lemma}\label{lem:batyrev}
Let $f:A\rightarrow B$ be a birational map of Calabi-Yau threefolds defined over a finite field. There exist open subsets $U\subset A$ and $V\subset B$ such that $\textnormal{codim}_A (A\setminus U),\ \textnormal{codim}_B (B\setminus V)\geq 2$ and $f\big| _U:U\rightarrow V$ is an isomorphism.
\end{lemma}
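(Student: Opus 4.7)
The plan is to adapt Batyrev's classical argument---that two birational Calabi--Yau varieties are isomorphic in codimension one---to the positive-characteristic setting, by comparing discrepancies of two birational contractions from a common smooth model. First I resolve the indeterminacy of $f\colon A\dashrightarrow B$: taking the closure of its graph in $A\times B$ and applying threefold resolution of singularities (available in our characteristic), I obtain a smooth variety $W$ together with proper birational morphisms $p\colon W\to A$ and $q\colon W\to B$ satisfying $q=f\circ p$.

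Next, I exploit $K_A\sim K_B\sim 0$. Since $A$ and $B$ are smooth, every exceptional prime divisor carries a strictly positive discrepancy, so the canonical class of $W$ admits two a priori distinct exceptional representations: $K_W\sim\sum_i a_iE_i^p$ with $a_i>0$ and the $E_i^p$ running over $p$-exceptional prime divisors, and $K_W\sim\sum_j b_jE_j^q$ with $b_j>0$ and the $E_j^q$ running over $q$-exceptional prime divisors. Applying $p_*$ to the resulting linear equivalence $\sum_i a_iE_i^p\sim\sum_j b_jE_j^q$ annihilates the left-hand side and turns the right-hand side into a non-negative combination of prime divisors on $A$ linearly equivalent to zero, hence the zero divisor. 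Since every $b_j>0$, each $E_j^q$ must itself be $p$-exceptional; the symmetric argument shows the two sets of exceptional prime divisors on $W$ coincide, and I denote their common value by $\{E_1,\ldots,E_r\}$.

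Finally I assemble the open subsets. Set $U:=A\setminus p\bigl(\bigcup_k E_k\bigr)$ and $V:=B\setminus q\bigl(\bigcup_k E_k\bigr)$; by exceptionality each $E_k$ maps to a subvariety of codimension $\geq 2$ on both sides, so $A\setminus U$ and $B\setminus V$ have codimension $\geq 2$. Purity of the exceptional locus for a proper birational morphism between smooth varieties identifies the non-isomorphism locus of $p$ in $W$ with $\bigcup_k E_k$, so $p^{-1}(U)=W\setminus\bigcup_k E_k$ and $p$ restricts to an isomorphism $p^{-1}(U)\xrightarrow{\sim}U$; the symmetric statement gives $q\colon q^{-1}(V)\xrightarrow{\sim}V$. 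Since the common exceptional set forces $p^{-1}(U)=q^{-1}(V)$, the composition $f|_U=q\circ(p|_{p^{-1}(U)})^{-1}$ is the desired isomorphism $U\xrightarrow{\sim}V$.

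The substantive step is the pushforward computation in the middle paragraph: the equality of exceptional sets is a rigidity phenomenon driven by $K_A\sim K_B\sim 0$, and it breaks immediately without the Calabi--Yau hypothesis, since then $p_*\sum_j b_jE_j^q$ need not be zero. The only ingredient specific to the positive-characteristic setting is the existence of the smooth common resolution $W$, furnished by threefold resolution of singularities; the rest of the argument is formal and insensitive to the base field.
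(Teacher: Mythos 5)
Your proof is correct and is essentially a reconstruction of the argument behind Proposition~3.1 of \cite{batyrev_1999} (common resolution of the graph, positivity of discrepancies over the smooth models, and triviality of $K_A$ and $K_B$ forcing the two exceptional divisors to coincide), which is precisely what the paper's one-line proof defers to. You also correctly isolate the one genuinely characteristic-$p$ ingredient that the paper leaves implicit, namely resolution of singularities for threefolds over the residue field (Abhyankar for $p>5$, Cossart--Piltant in general); the remaining inputs (purity of the exceptional locus, positivity of discrepancies over smooth points, Zariski's main theorem) are characteristic-free as you claim.
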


\begin{proof}
 Proof is the same as that of Proposition 3.1 in \cite{batyrev_1999} for the complex case.
\end{proof}

\begin{lemma}\label{lem:iso-bi}
Let $\mc{A}$ and $\mc{B}$ be projective, normal, irreducible schemes over $\textnormal{Spec}\ \mc{O}_K$ such that the generic fibers $A_\infty,B_\infty$ are isomorphic. Moreover, let us assume that $\mc{B}$ is regular. Then every non-ruled component of the special fiber $A_0$ is birational to a component of $B_0$.
\end{lemma}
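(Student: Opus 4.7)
The plan is to build a common birational cover of $\mc{A}$ and $\mc{B}$ and to track a lift of the chosen component through it. I would take $\mc{G}\subset\mc{A}\times_{\mc{O}_K}\mc{B}$ to be the reduced scheme-theoretic closure of the graph of the given generic isomorphism $A_\infty\simeq B_\infty$. Then $\mc{G}$ is integral, proper and flat over $\mc{O}_K$ (as an integral scheme dominating a discrete valuation ring), and the two projections $p_A\colon\mc{G}\to\mc{A}$ and $p_B\colon\mc{G}\to\mc{B}$ are proper and birational; in particular every irreducible component of the special fiber $\mc{G}_0$ has codimension one in $\mc{G}$. Throughout I would use the standard purity statement that a rational map from a normal scheme to a proper one has indeterminacy locus of codimension at least two; applied to $p_A^{-1}$ and $p_B^{-1}$ (using normality of $\mc{A}$ and regularity of $\mc{B}$ respectively), it implies that every codimension-one irreducible subset of $\mc{A}$ or $\mc{B}$ meets the locus on which the relevant inverse is an isomorphism in a dense open subset.

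First, applied to $p_A^{-1}$, the divisor $E$ admits a dense open $E^\circ$ on which $p_A^{-1}$ is an isomorphism, and the closure of its image in $\mc{G}$ is an irreducible component $\tilde E\subset\mc{G}_0$ that is birational to $E$ via $p_A$. I would then examine $p_B(\tilde E)\subset B_0$. If $\dim p_B(\tilde E)<\dim\tilde E$, then $\tilde E$ lies in the exceptional locus of $p_B$; being a codimension-one irreducible subset of $\mc{G}$ contained in the proper closed subset $\operatorname{Exc}(p_B)$, it must itself be an irreducible component of $\operatorname{Exc}(p_B)$. Lemma~\ref{lem:ab} applied to $p_B$ (where regularity of $\mc{B}$ is crucial) would then force $\tilde E$ to be ruled; since ruledness is a birational invariant and $\tilde E$ is birational to $E$, this would make $E$ ruled, contradicting the hypothesis. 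Hence $\dim p_B(\tilde E)=\dim\tilde E$, so $p_B(\tilde E)$ is an irreducible component of $B_0$. Reapplying the codimension-two principle to $p_B^{-1}$, the isomorphism locus of $p_B$ meets $p_B(\tilde E)$ in a dense open subset, over which $p_B$ identifies $\tilde E$ with $p_B(\tilde E)$. Composing this with the birational equivalence between $\tilde E$ and $E$ gives the required birational identification of $E$ with a component of $B_0$.

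The main obstacle to watch for is the dimensional bookkeeping: one needs $\tilde E$ to be a genuine codimension-one component of $\mc{G}_0$ and, symmetrically, $p_B(\tilde E)$ to be a full component of $B_0$. Both steps rely on the purity statement for indeterminacy loci of rational maps into proper targets, which is precisely where the hypotheses ``$\mc{A}$ normal'' and ``$\mc{B}$ regular'' enter. Once this is in place the dichotomy ``$\tilde E$ exceptional or not'' is clean, and Lemma~\ref{lem:ab} combined with the non-ruledness of $E$ eliminates the exceptional case.
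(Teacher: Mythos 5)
Your argument is correct and follows essentially the same route as the paper: both pass to the closure of the graph of the generic isomorphism (the paper additionally normalizes it), use that a proper birational morphism onto a normal scheme is an isomorphism away from a codimension $\geq 2$ subset of the target in order to lift the non-ruled component birationally to a component $\tilde E$ of the special fiber of the graph, and then invoke Lemma~\ref{lem:ab} together with non-ruledness to exclude $\tilde E$ from the exceptional locus of the projection to $\mathcal{B}$. The remaining differences (your dimension dichotomy versus the paper's appeal to Zariski's Main Theorem, and phrasing the codimension bound as purity of indeterminacy loci) are cosmetic.
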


\begin{proof}
Any isomorphism $f:A_\infty\rightarrow B_\infty$ induces a birational map $\overline{f}:\mathcal{A}\rightarrow\mathcal{B}$ of schemes over $\operatorname{Spec}\mathcal{O}_K$. Consider the normalization of the (closure of the) graph $\Gamma_f$ of $\overline{f}$:
$$
\mathcal{Z}:={\overline{\Gamma_f}}^\nu
$$
where $\nu:=\nu_V:V^\nu\rightarrow V$ is the the normalization morphism as in Definition 29.54.1 of \cite{stacks}. Since $\overline{\Gamma_f}\subset \mathcal{A}\times \mathcal{B}$, we have natural projections $\pi_A:\overline{\Gamma_f}\rightarrow \mathcal{A}$, $\pi_B:\overline{\Gamma_f}\rightarrow \mathcal{B}$. Consider the exceptional locus $E_A\subset \mathcal{Z}$, resp. $E_B$, of the birational morphism \mbox{$f_A:=\pi_A\circ\nu_{\overline{\Gamma_f}}:\mathcal{Z}\rightarrow \mathcal{A}$}, resp. \mbox{$f_B:=\pi_B\circ\nu_{\overline{\Gamma_f}}:\mathcal{Z}\rightarrow \mathcal{B}$}.

Let $A_0'$ be a non-ruled component of $A_0$. Since $E_A=\{z\in \mathcal{Z}: \dim \left(f^{-1}(f(z))>0\right)\}$ and $\operatorname{codim}_\mathcal{A} A_0'=1$, it follows that $A_0'\setminus f_A(E_A)\neq\varnothing$. As $f_A$ is an isomorphism away from $E_A$, we conclude that $A_0'$ and $Z_0':=f_A^{-1}\left(A'_0\setminus f_A(E_A)\right)$ are birational.

In particular, $Z_0'$ is not ruled. However, by Lemma \ref{lem:ab} every irreducible component of the exceptional locus $E_B$ is ruled, hence $Z'_0\not\subset E_B$ by Zariski's Main Theorem \cite{Zariski}. It follows that $Z_0'\setminus E_B$ is birational to some irreducible component of $B_0$, which concludes the proof.
\end{proof}

Now we are ready to prove the remaining part of the Main Theorem.

\begin{theorem}\label{th:bad}
Calabi-Yau threefold $Y_\infty$ has bad reduction.
\end{theorem}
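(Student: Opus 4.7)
The plan is to argue by contradiction. Suppose $Y_\infty$ has potentially good reduction: there exist a finite extension $L/K$ and a smooth proper algebraic space $\mathcal{V}\to\operatorname{Spec}\mathcal{O}_L$ whose generic fiber is $V_\infty\cong Y_\infty\times_K L$ and whose special fiber $V_0$ is a scheme. Constancy of $\ell$-adic Betti numbers in smooth proper families shows that $V_0$ is a smooth projective Calabi--Yau threefold satisfying $b_3(V_0)=b_3(V_\infty)=b_3(Y_\infty)=4$; the last equality is extracted from the dimension count at the end of the proof of Theorem \ref{th:cryst}.

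The first step is to put $\mathcal{V}$ into the form required by Lemma \ref{lem:iso-bi}. The open subspace $U:=V_\infty$ is quasi-projective over $\mathcal{O}_L$, so Lemma \ref{lem:chow} produces a $V_\infty$-admissible blow-up $\pi\colon\widetilde{\mathcal{V}}\to\mathcal{V}$ which is projective (since also proper) over $\mathcal{O}_L$; a further blow-up supported in $V_0$ allows us to assume $\widetilde{\mathcal{V}}$ is regular. Because every center lies in $V_0$, the morphism $\pi$ is an isomorphism on generic fibers and its exceptional divisors are contained in $\widetilde{V}_0$. By Lemma \ref{lem:ab} every exceptional component is ruled, so the strict transform $V_0'$ of $V_0$ is the unique non-ruled component of $\widetilde{V}_0$; moreover, $V_0'$ is birational to $V_0$.

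Next I would apply Lemma \ref{lem:iso-bi} to $\mathcal{A}:=\mathcal{Y}\times_{\mathcal{O}_K}\mathcal{O}_L$ (replaced by its normalization if $L/K$ is ramified, to preserve normality) and $\mathcal{B}:=\widetilde{\mathcal{V}}$. Theorem \ref{th:con} identifies the components of $A_0$ (up to birational equivalence) as the quadric-surface bundle $Q_{0,L}\to\mathbb{P}^1$, which is rational and hence ruled, and the rigid Calabi--Yau threefold $R_{0,L}$, which is non-ruled by Lemma \ref{lem:ruled}. Thus $R_{0,L}$ is the only non-ruled component of $A_0$, while $V_0'$ is the only non-ruled component of $\widetilde{V}_0$. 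The lemma forces $R_{0,L}$ to be birational to $V_0'$, and therefore to $V_0$.

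Finally, $V_0$ and $R_{0,L}$ are birational smooth projective Calabi--Yau threefolds over the residue field of $L$, and Lemma \ref{lem:batyrev} gives an isomorphism away from a codimension-two subset. Birational invariance of the third Betti number for smooth projective Calabi--Yau threefolds --- Batyrev's theorem in characteristic zero, extended by Ito via $p$-adic motivic integration to positive characteristic --- yields $b_3(V_0)=b_3(R_{0,L})$. But $R_{0,L}$ is rigid, whence $b_3(R_{0,L})=2\neq 4=b_3(V_0)$, the desired contradiction. The main obstacle is the first step: producing a regular projective scheme from the smooth algebraic-space model $\mathcal{V}$, with the strict transform of $V_0$ remaining the unique non-ruled component of the new special fiber, requires combining Lemma \ref{lem:chow} with a careful four-dimensional resolution-of-singularities argument in the arithmetic setting.
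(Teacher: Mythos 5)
Your overall strategy coincides with the paper's: assume a smooth model exists, projectivize it with Lemma \ref{lem:chow}, use Lemma \ref{lem:iso-bi} to force the rigid Calabi--Yau component $R_0$ to be birational to the special fiber of the good model, and derive a contradiction from $b_3=2$ versus $b_3=4$ via the codimension-two isomorphism of Lemma \ref{lem:batyrev}. However, there is a genuine gap exactly where you flag ``the main obstacle,'' and it is not a technicality one can wave away: you apply Lemma \ref{lem:iso-bi} with $\mathcal{B}:=\widetilde{\mathcal{V}}$, which requires $\widetilde{\mathcal{V}}$ to be \emph{regular}, and you propose to achieve regularity by ``a further blow-up supported in $V_0$.'' That is resolution of singularities for a four-dimensional scheme over a mixed-characteristic DVR, which is not known. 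The paper avoids this entirely by running Lemma \ref{lem:iso-bi} in the opposite direction: it takes $\mathcal{A}:=\mathcal{E}$ (the normalization of the projectivized model, which only needs to be \emph{normal}) and $\mathcal{B}:=\mathcal{Y}$ (which is regular by construction, being a semi-stable model). The conclusion is the same --- the non-ruled Calabi--Yau component of $E_0$ must be birational to a component of $Y_0$, hence to $R_0$ since $Q_0$ is ruled --- but no resolution of singularities is needed. Your version of the argument, as written, cannot be completed.

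A secondary, less serious divergence is the endgame. You quote birational invariance of $b_3$ for smooth Calabi--Yau threefolds over a finite field as a black box attributed to Batyrev--Ito. Ito's extension concerns Hodge numbers in characteristic zero; what is actually needed here is a statement about $\ell$-adic Betti numbers of birational smooth Calabi--Yau threefolds over $\mathbb{F}_q$, and the paper proves exactly this by hand: from the codimension-two isomorphism of Lemma \ref{lem:batyrev} it deduces the identity $Z(E_0',s)\cdot Z(G,s)=Z(R_0,s)\cdot Z(F,s)$ of zeta functions, observes that the curve factors $Z(F,s)$, $Z(G,s)$ contribute no roots of absolute value $q^{-3/2}$, and counts weight-three roots via Deligne's purity theorem. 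This is essentially the engine of Batyrev's original $p$-adic argument, so your instinct is right, but you should either supply a precise positive-characteristic reference or reproduce the weight argument as the paper does. Neither issue here is fatal on its own; the regularity gap in the first step is.
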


\begin{proof}
Assume that $Y_\infty$ has good reduction, w.l.o.g over $K$. Let $\mc{C}\rightarrow\operatorname{Spec}\mathcal{O}_K$ be a smooth algebraic space such that $Y_\infty\simeq C_\infty$. The special fiber $Y_0$ has two irreducible components $R_0$ and $Q_0$, as in Theorem \ref{th:con}, while $C_0$ is a smooth Calabi-Yau threefold defined over a finite field such that $b_3(C_0)=4$.

By Lemma \ref{lem:chow} there exists a blow-up $\pi:\mc{D}\rightarrow\mc{C}$ at a closed subscheme of $C_0$ such that $\mc{D}$ is projective over $\text{Spec} \, \mc{O}_L$ and the generic fiber $D_\infty$ is isomorphic to $C_\infty \simeq Y_\infty$. The special fiber $D_0$ contains a component birational to $C_0$.

Let $\mc{E}$ be the normalization of $\mc{D}$. Schemes $\mc{E}$ and $\mc{Y}$ are projective, irreducible and normal. $R_0$ is a Calabi-Yau threefold, and $E_0$ contains a (smooth) Calabi-Yau component $E_0'$; both are not ruled by Lemma \ref{lem:ruled}. It follows from Lemma \ref{lem:iso-bi} that the special fibers' components $E_0'$ and $R_0$ are birational.

By Lemma \ref{lem:batyrev} there exist closed subsets $F\subset E_0',\ G\subset R_0$ of dimension at most $1$ such that $E_0'\setminus F$ and $R_0\setminus G$ are isomorphic. In particular, we have the equality of zeta functions:
\begin{equation}\label{eq:Z}
Z(E_0',s)\cdot Z(G,s)=Z(R_0,s)\cdot Z(F,s)
\end{equation}
For any rational function $f\in\mathbb{Q}(T)$ we define the multi-sets (counted with multiplicities):
$$
\mathcal{Z}(f):=f^{-1}(\{0\}),\quad \mathcal{Z}_i(f):=\mathcal{Z}(f)\cap\{ x\in\overline{\mathbb{Q}}: \forall_{\iota:\overline{\mathbb{Q}}\hookrightarrow\mathbb{C} }\ |\iota(x)|=q^{-\tfrac{i}{2}}\}
$$
By the Weil conjectures, functions $Z(E_0',s),\ Z(R_0,s)$ are rational (see \cite{Dwork}). By the Weil conjectures for (possibly singular) curves, so are $Z(F,s)$ and $Z(G,s)$ (see \cite{AP}). Moreover, all roots of $Z(F,s)$ and $Z(G,s)$ have absolute value $1$ or $\sqrt{q}^{-1}$. Hence,
$$\mathcal{Z}\left(Z(F,s)\right)=\mathcal{Z}_{0}\left(Z(F,s)\right)\cup \mathcal{Z}_{1}\left( Z(F,s)\right)\ \textnormal{and}\ \mathcal{Z}\left(Z(G,s)\right)=\mathcal{Z}_{0}\left(Z(G,s)\right)\cup \mathcal{Z}_{1}\left( Z(G,s)\right)$$
Combining this with (\ref{eq:Z}), we obtain $$\mathcal{Z}_3\Big(Z(E_0',s)\Big)=\mathcal{Z}_3\Big(Z(E_0',s)\cdot Z(F,s)\Big)=\mathcal{Z}_3\Big(Z(R_0,s)\cdot Z(G,s)\Big)=\mathcal{Z}_3\Big(Z(R_0,s)\Big)$$
However, by the Riemann hypothesis (see \cite{Deligne}) we have $|\mathcal{Z}_3(Z(E_0',s))|=b_3(E_\infty)=b_3(Y_\infty)=4$ and $|\mathcal{Z}_3(Z(R_0,s))|=b_3(R_\infty)=2$, 
a contradiction.
\end{proof}

\vspace{-5mm}

\end{document}